\theoremstyle{plain}
\newtheorem{theorem}{Theorem}[section]
\newtheorem{lemma}[theorem]{Lemma}
\theoremstyle{definition}
\newtheorem{definition}[theorem]{Definition}
\theoremstyle{remark}
 \numberwithin{equation}{section}
\newcommand{\DD}{{\mathbb D}}
\newcommand{\RR}{{\mathbb R}}
\newcommand{\TT}{{\mathbb T}}
\newcommand{\cH}{{\cal H}}
\renewcommand{\hat}{\widehat}
\begin{document}

\title[Outer functions and divergence in de Branges--Rovnyak spaces]{Outer functions and divergence\\ in de Branges--Rovnyak spaces}

\author[Mashreghi]{Javad Mashreghi}
\address{D\'epartement de math\'ematiques et de statistique, 
Universit\'e Laval, \\Qu\'ebec (QC), Canada G1V 0A6}
\email{javad.mashreghi@mat.ulaval.ca} 

\author[Ransford]{Thomas Ransford}
\address{D\'epartement de math\'ematiques et de statistique, 
Universit\'e Laval, \\Qu\'ebec (QC), Canada G1V 0A6}
\email{thomas.ransford@mat.ulaval.ca}

\thanks{Mashreghi supported by a grant from NSERC\\
Ransford supported by grants from NSERC  
and the Canada Research Chairs program}

\begin{abstract}
In most classical holomorphic function spaces on the unit disk
in which the polynomials are dense, 
a function $f$ can be approximated in  norm  
by its dilates $f_r(z):=f(rz)~(r<1)$,
in other words, $\lim_{r\to1^-}\|f_r-f\|=0$.
We construct a de Branges--Rovnyak space $\cH(b)$ 
in which the polynomials are dense, and a function $f\in\cH(b)$ such that  
 $\lim_{r\to1^-}\|f_r\|_{\cH(b)}=\infty$.
The essential feature of our construction lies in the fact  that $b$ is an outer function.
\end{abstract}

\subjclass{46E22, 47B32, 30H15}

\keywords{De Branges--Rovnyak space, outer function, Toeplitz operator}

\maketitle

\section{Introduction}\label{S:intro}

In most holomorphic function spaces on the unit disk, at least those in which the polynomials are dense, the radial dilates  of a function in the space converge to the function in the norm of the space. In other words, writing $f_r(z):=f(rz)$, we have $\lim_{r\to1^-}\|f_r-f\|=0$ for all $f$ in the space.

However, perhaps surprisingly, this is not always true.
An example was given in \cite{EFKMR16} where in fact $\lim_{r\to1^-}\|f_r\|=\infty$.
The space in question was a 
de Branges--Rovnyak space $\cH(b)$.
We shall give the precise definition of $\cH(b)$ in \S\ref{S:background}. Suffice it to say
that de Branges--Rovnyak spaces  
are a family of subspaces $\cH(b)$ of the Hardy space~$H^2$,
parametrized by elements $b$ of the unit ball of $H^\infty$. In the example constructed in \cite{EFKMR16},
the function $b$ was the product of a rational function and a carefully chosen Blaschke product, and the justification of the construction depended heavily on certain properties Blaschke products with uniformly separated zeros. 

Our purpose in this article is to show that such an example can also be constructed with $b$ being an outer function. This has the advantage that, as we no longer need to cite results about Blaschke products with uniformly separated zeros, the construction is more elementary than that given in \cite{EFKMR16}. 

\begin{theorem}\label{T:main}
There exist an outer function $b$ in the unit ball of $H^\infty$ and a function $f\in\cH(b)$ such that polynomials are dense in $\cH(b)$, yet 
\begin{equation}\label{E:main}
\lim_{r\to1^-}\|f_r\|_{\cH(b)}=\infty.
\end{equation}
\end{theorem}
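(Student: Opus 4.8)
The plan is to carry out the whole construction in the non-extreme case, where density of polynomials comes for free. Concretely I would prescribe the Pythagorean mate first: choose an outer function $a$ in the closed unit ball of $H^\infty$ with $a(0)>0$, $|a|<1$ a.e., and let $b$ be the outer function with $b(0)>0$ and $|b|^2=1-|a|^2$ a.e.\ on $\TT$; then $b$ is outer, lies in the unit ball of $H^\infty$, and — as soon as $\log(1-|a|^2)\in L^1(\TT)$ — is non-extreme, so it is classical that polynomials are dense in $\cH(b)$. For the norm I would use the standard description of non-extreme $\cH(b)$: $f\in\cH(b)$ iff $T_{\bar b}f$ lies in the range of $T_{\bar a}$, and then $\|f\|_{\cH(b)}^2=\|f\|_{H^2}^2+\|T_{\bar a}^{-1}T_{\bar b}f\|_{H^2}^2$, the inverse being meaningful because $a$ outer makes $T_{\bar a}$ injective. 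Since $\|f_r\|_{H^2}\le\|f\|_{H^2}$, proving \eqref{E:main} reduces to producing such an $a$ (hence $b$) and an $f\in\cH(b)$ for which the companion norm $\|T_{\bar a}^{-1}T_{\bar b}f_r\|_{H^2}$ tends to $\infty$ as $r\to1^-$ (with the convention that it equals $+\infty$ when $T_{\bar b}f_r\notin T_{\bar a}H^2$, i.e.\ when $f_r\notin\cH(b)$). It is convenient to note that, via $T_{\bar a}T_{\bar a}^{*}=T_{\bar a}T_a=T_{|a|^2}$, this companion norm has the variational form
\[
\|T_{\bar a}^{-1}T_{\bar b}g\|_{H^2}^{2}=\sup_{\phi\in H^2\setminus\{0\}}\frac{|\langle g,b\phi\rangle_{H^2}|^{2}}{\int_{\TT}|a|^{2}\,|\phi|^{2}\,dm},
\]
which turns ``$f\in\cH(b)$'' into a family of upper bounds and ``divergence of the dilates'' into the choice, for each $r$, of a single good test function $\phi$.

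For the construction itself, the idea is to make $b$ outer yet ``model-space-like'' near the point $1$: pick $a$ so that $|a|$ equals some fixed value close to $1$ away from a sparse sequence of short arcs $I_n$ clustering at $1$, and $|a|\equiv\epsilon_n$ on $I_n$, with $\epsilon_n$ so small that $\sum_n|I_n|\log(1/\epsilon_n)<\infty$ — exactly what is needed for $a$, and hence $b$, to be a genuine bounded outer function with $b$ non-extreme — but otherwise $\epsilon_n$ as small as we please. Then $|b|\approx1$ on each $I_n$, while the weight $|a|^2$ in the variational formula is of size only $\epsilon_n^2$ there, so reproducing kernels $\phi$ of $H^2$ localized closely enough to $I_n$ have $\int_{\TT}|a|^2|\phi|^2\,dm$ very small. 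For $f$ I would take a multiscale series $f=\sum_n\beta_nf_n$, where $f_n$ is concentrated near $1$ at the scale of $I_n$ — say a normalized reproducing kernel at a point lying over $I_n$ — and the coefficients $\beta_n$ are taken as large as possible subject to $f\in\cH(b)$, i.e.\ to the supremum above being finite. For the dilate $f_r$ with $r$ in resonance with the scale of $I_n$, testing against such a $\phi$ localized at $I_n$ gives a ratio whose numerator $|\langle f_r,b\phi\rangle|^2$ — essentially $|(T_{\bar b}f_r)(\zeta_n)|^2$ times a normalization — stays sizable, because the resonance keeps $f_r$ of full size over $I_n$ where $|b|\approx1$, whereas the denominator is governed by the tiny value $\epsilon_n$; by choosing the $\epsilon_n$ small enough this ratio tends to $\infty$. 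For $f$ itself the same test functions yield only bounded ratios — that boundedness being exactly the assertion $f\in\cH(b)$ — and it is the dilation, which does not commute with the operations $T_{\bar b}$ and $T_{\bar a}^{-1}$ defining the $\cH(b)$-norm, that breaks the balance.

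The main obstacle is the quantitative bookkeeping: one must choose the arcs $I_n$, the heights $\epsilon_n$, the data defining $f_n$, and the coefficients $\beta_n$ so that all of the following hold at once — (i) $\sum_n|I_n|\log(1/\epsilon_n)<\infty$; (ii) $f\in\cH(b)$, which is a genuine \emph{upper} estimate on $\sup_\phi|\langle f,b\phi\rangle|^2/\int_{\TT}|a|^2|\phi|^2\,dm$ over \emph{all} $\phi\in H^2$, so that one must control not only the localized test functions but also the cross-interaction between different scales and the Riesz-projection tails hidden inside $T_{\bar b}f$; and (iii) $\|T_{\bar a}^{-1}T_{\bar b}f_r\|_{H^2}\to\infty$, which moreover should be arranged to hold for \emph{all} $r$ close to $1$, not merely along the resonant radii, so that one obtains a genuine limit in \eqref{E:main} rather than a $\limsup$. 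Making $f$ lie ``just barely'' in $\cH(b)$ while its dilates escape every bounded set is the whole game. The gain over \cite{EFKMR16} is structural rather than conceptual: here the small values $\epsilon_n$ of $|a|$ on the arcs $I_n$ play the role that uniformly separated Blaschke zeros played there, so no Carleson-separation machinery is needed — but the estimates stay delicate, and verifying (ii) is, I expect, the hard part.
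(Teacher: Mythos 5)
Your overall framework is sound and, in skeleton, close to the paper's: work in the non-extreme case so that density of polynomials is automatic, prescribe the pair $(b,a)$ through its boundary modulus with $|a|$ small on a sparse family of arcs accumulating at a boundary point, take $f$ to be a multiscale sum of reproducing kernels, and show that the companion norm of $f_r$ blows up; your variational formula for $\|T_{\overline a}^{-1}T_{\overline b}g\|_{H^2}$ is correct. But there is a genuine gap, and it is precisely the step you defer to ``quantitative bookkeeping'': nothing in the outline produces a quantitative lower bound on the \emph{interior} values that drive the divergence. The paper's proof hinges on Lemma~\ref{L:outer}, which constructs an outer $\phi=b/a$ with $|\phi|\ge1$ and
\[
\log\Bigl|\frac{\phi(rw_n)}{\phi(w_n)}\Bigr|\ \ge\ \frac{\rho_n}{1-w_n}
\qquad(r\in[w_n,w_{n+1}]),
\]
obtained by transplanting to the half-plane and estimating the difference of two Poisson integrals of $\log|\Phi|=\sum_k(\epsilon_k/t_k)1_{[2t_k,3t_k]}$ --- in effect a derivative estimate for the Poisson integral as the evaluation point moves from $w_n$ to $rw_n$. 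That single estimate is what makes $\phi(rw_n)$ exponentially larger than $\phi(w_n)$ for \emph{every} $r\in[w_n,w_{n+1}]$; since these intervals cover a neighbourhood of $1$, one gets a true limit in \eqref{E:main} rather than a $\limsup$ along resonant radii. Your ``resonance'' heuristic does not substitute for this: without knowing how the boundary prescription of $|a|$ propagates to the values of $b/a$ at $rw_j$ versus $w_j$, you cannot calibrate the coefficients $\beta_n$ so that $f\in\cH(b)$ while the dilates escape. (A smaller technical point: your localized test functions must sit at depth much smaller than $|I_n|$, not comparable to it, since a reproducing kernel at depth $|I_n|$ retains a fixed fraction of its $L^2$ mass off $I_n$, where $|a|$ is bounded below, so the denominator $\int_\TT|a|^2|\phi|^2\,dm$ would not be small.)

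The part you single out as the hard part, namely verifying $f\in\cH(b)$, is in fact the easy part, and the duality formulation makes it unnecessarily hard. For $f=\sum_jc_jk_{w_j}$ one has the exact identity $T_{\overline b}k_w=\overline{b(w)}\,k_w=T_{\overline a}\bigl(\overline{\phi(w)}\,k_w\bigr)$, so $f^+=\sum_jc_j\overline{\phi(w_j)}\,k_{w_j}$ as soon as both series converge absolutely in $H^2$ (Lemma~\ref{L:sumcjkwj}); choosing $c_j$ proportional to $(1-w_j)^{1/2}/\phi(w_j)$ then makes membership a one-line check, with no supremum over test functions and no cross-scale interactions to control. The same identity gives $(f_r)^+(0)=\sum_jc_j\phi(rw_j)$, and after arranging $\phi>0$ on $(-1,1)$ (replace $\phi(z)$ by $\phi(z)\overline{\phi(\overline z)}$) every term is positive, so a single resonant term yields divergence --- which also disposes of the cancellation issue hidden in your numerator $|\langle f_r,b\phi\rangle|$. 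In short: your intuition that small values of $|a|$ on sparse arcs replace the separated Blaschke zeros of \cite{EFKMR16} is exactly right, but the proof needs the interior growth lemma and the explicit formula for $f^+$, neither of which appears in the proposal.
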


Our proof proceeds via an auxiliary result of independent interest, in which we construct an outer function with very precise control over its behavior along the radius $(0,1)$.
The necessary background on de Branges--Rovnyak spaces is described in \S\ref{S:background}. Theorem~\ref{T:main} is proved in \S\ref{S:proof}, and we make some concluding remarks in \S\ref{S:remarks}.

\section{Background on $\cH(b)$-spaces}\label{S:background}

We denote by $\DD$ and $\TT$ the open unit disk and the unit circle respectively. 
Also, we denote by $H^2$  the Hardy space on $\DD$.
Given $\psi\in L^\infty(\TT)$, 
the corresponding Toeplitz operator $T_\psi:H^2\to H^2$ 
is defined by
\[
T_\psi f:=P_+(\psi f) \qquad(f\in H^2),
\]
where $P_+:L^2(\TT)\to H^2$ denotes 
the orthogonal projection of $L^2(\TT)$ onto $H^2$. 
Clearly $T_\psi$ is a bounded operator on $H^2$ 
with $\|T_\psi\|\le\|\psi\|_{L^\infty(\TT)}$. 
If $h\in H^\infty$, 
then $T_h$ is simply the operator of multiplication by $h$ 
and its adjoint is $T_{\overline{h}}$.

\begin{definition}
Let $b\in H^\infty$
with $\|b\|_{H^\infty}\le1$.
The associated \emph{de Branges--Rovnyak space} 
$\cH(b)$ is the image of $H^2$ 
under the operator $(I-T_ bT_{\overline b})^{1/2}$.
We define a norm  on $\cH(b)$ making $(I-T_bT_{\overline{b}})^{1/2}$ 
a partial isometry from $H^2$ onto $\cH(b)$, namely
\[
\|(I-T_bT_{\overline{b}})^{1/2}f\|_{\cH(b)}:=\|f\|_{H^2}
\qquad(f\in H^2\ominus \ker(I-T_bT_{\overline{b}})^{1/2})).
\]
\end{definition}

This is the definition of $\cH(b)$ as given in \cite{Sa94}. 
The original definition of de Branges and Rovnyak,
based on the notion of complementary space, 
is different but equivalent. 
An explanation of the equivalence can be found in \cite[pp.7--8]{Sa94}. 
A third approach is to start from the positive kernel
\[
k_w^b(z):=\frac{1-\overline{b(w)}b(z)}{1-\overline{w}z}
\qquad(z,w\in\DD),
\]
and to define $\cH(b)$ as 
the reproducing kernel Hilbert space associated with this kernel.
For a more detailed description of these spaces, 
we refer to the recent two-volume work \cite{FM16a,FM16b}.

The general theory of $\cH(b)$-spaces splits  into two cases, 
according to whether $b$ is an extreme point or a non-extreme point 
of the unit ball of $H^\infty$. 
This dichotomy is illustrated by following result.

\begin{theorem}\label{T:nonextreme}
Let $b\in H^\infty$ with $\|b\|_{H^\infty}\le 1$. 
The following are equivalent:
\begin{enumerate}[(i)]
\item $b$ is a non-extreme point of the unit ball of $H^\infty$;
\item  $\log (1-|b|^2)\in L^1(\TT)$;
\item  $\cH(b)$ contains all functions holomorphic in a neighborhood of $\overline{\DD}$;
\item polynomials are dense in $\cH(b)$.
\end{enumerate}
\end{theorem}

\begin{proof}
The equivalence between (i) and (ii) is proved in \cite[Theorem~7.9]{Du70}. 
The equivalence between (i) and (iii) follows from \cite[\S IV-6 and \S V-1]{Sa94}.
Finally, the equivalence between (i) and (iv) follows from \cite[\S IV-2, \S IV-3 and \S V-1]{Sa94}. (Another, more constructive, proof of the density of polynomials in the case when $b$ is non-extreme can be found in \cite[Theorem~5.1]{EFKMR16}.)
\end{proof}

Henceforth we shall simply say that $b$ is `extreme' or `non-extreme', 
it being understood that this is relative to the unit ball of $H^\infty$.

From the equivalence between (i) and (ii), it follows that, 
if  $b$ is non-extreme,
then there is an  outer function $a$ 
such that $a(0)>0$ and  $|a|^2+|b|^2=1$ a.e.\ on $\TT$ 
(see \cite[\S IV-1]{Sa94}). 
The function $a$ is uniquely determined by $b$. 
We shall call $(b,a)$ a \emph{pair}. 
The following result gives 
a useful characterization of $\cH(b)$ in this case.

\begin{theorem}[\protect{\cite[\S IV-1]{Sa94}}]\label{T:f+}
Let $b$ be non-extreme, 
let $(b,a)$ be a pair and let $f\in H^2$.
Then $f\in\cH(b)$ if and only if 
$T_{\overline b}f\in T_{\overline a}(H^2)$.
In this case, there exists a unique function $f^+\in H^2$
such that $T_{\overline b}f=T_{\overline a}f^+$, and
\begin{equation}\label{E:f+}
\|f\|_{\cH(b)}^2=\|f\|_{H^2}^2+\|f^+\|_{H^2}^2.
\end{equation}
\end{theorem}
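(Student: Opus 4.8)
The plan is to realize $\cH(b)$, norm and all, inside $H^2\oplus H^2$ using the pair $(b,a)$. The key algebraic fact is the identity $T_{\overline a}T_a+T_{\overline b}T_b=I$ on $H^2$; it follows at once from $|a|^2+|b|^2=1$ a.e.\ on $\TT$, since $T_{\overline a}T_ah+T_{\overline b}T_bh=P_+\bigl((|a|^2+|b|^2)h\bigr)=h$ for every $h\in H^2$. Consequently the operator $C\colon H^2\to H^2\oplus H^2$ defined by $Ch:=(ah,bh)$ is an isometry (its adjoint is $C^*(u,v)=T_{\overline a}u+T_{\overline b}v$, so $C^*C=I$), hence $\operatorname{ran}C$ is closed and $CC^*$ is the orthogonal projection onto it. A routine computation identifies the orthogonal complement: \[\mathcal K:=(\operatorname{ran}C)^\perp=\{(u,v)\in H^2\oplus H^2:T_{\overline a}u+T_{\overline b}v=0\}.\] Since $a$ is outer, $aH^2$ is dense in $H^2$, so $T_{\overline a}=T_a^*$ is injective; hence the projection $(u,v)\mapsto v$ is one-to-one on $\mathcal K$, with image exactly $\{f\in H^2:T_{\overline b}f\in T_{\overline a}(H^2)\}$. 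For such an $f$ the partner $u$ is uniquely determined, and $f^+:=-u$ is the unique element of $H^2$ with $T_{\overline b}f=T_{\overline a}f^+$; transporting the norm of $\mathcal K$ through this bijection yields a Hilbert space $\mathcal M$ of holomorphic functions on $\DD$ with $\|f\|_{\mathcal M}^2=\|f\|_{H^2}^2+\|f^+\|_{H^2}^2$. It remains only to prove $\mathcal M=\cH(b)$ isometrically; this gives simultaneously the membership criterion and formula~\eqref{E:f+}.

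For the identification I would use the reproducing-kernel description of $\cH(b)$ recalled above. Since $\|\cdot\|_{H^2}\le\|\cdot\|_{\mathcal M}$, point evaluations are bounded on $\mathcal M$, so $\mathcal M$ is a reproducing kernel Hilbert space, and by the Moore--Aronszajn uniqueness theorem it suffices to show that its kernel is $k_w^b$. Given $f\in\mathcal M$ with partner $u$, and writing $k_w$ for the Szeg\H{o} kernel at $w$, we have $f(w)=\langle f,k_w\rangle_{H^2}=\langle(u,f),(0,k_w)\rangle_{H^2\oplus H^2}=\langle(u,f),P_{\mathcal K}(0,k_w)\rangle_{H^2\oplus H^2}$, the last equality because $(u,f)\in\mathcal K$. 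Thus the reproducing kernel of $\mathcal M$ at $w$ is the second coordinate of $P_{\mathcal K}(0,k_w)=(I-CC^*)(0,k_w)$. Using $T_{\overline b}k_w=\overline{b(w)}\,k_w$ one gets $C^*(0,k_w)=\overline{b(w)}\,k_w$, hence \[P_{\mathcal K}(0,k_w)=\bigl(-\overline{b(w)}\,ak_w,\ (1-\overline{b(w)}b)k_w\bigr),\] whose second coordinate is $\bigl(1-\overline{b(w)}b(z)\bigr)/(1-\overline wz)=k_w^b(z)$. Therefore $\mathcal M$ and $\cH(b)$ are reproducing kernel Hilbert spaces with the same kernel, so they coincide as spaces and in norm, completing the proof.

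The step needing the most care is the identification $\mathcal M=\cH(b)$: one must check that $\mathcal M$ really is a reproducing kernel Hilbert space --- completeness is inherited from the closed subspace $\mathcal K$, and boundedness of evaluations from $\|\cdot\|_{H^2}\le\|\cdot\|_{\mathcal M}$ --- and one must be entitled to use the kernel description of $\cH(b)$ in place of the definition via the partial isometry $(I-T_bT_{\overline b})^{1/2}$, which is legitimate since the two are equivalent. Everything else --- the identity $T_{\overline a}T_a+T_{\overline b}T_b=I$, the computation of $\mathcal K$, the injectivity of $T_{\overline a}$ (i.e.\ density of $aH^2$, itself a characterization of outerness), and the kernel computation --- is routine. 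An alternative, kernel-free route starts from the original complementary-space definition, $\|f\|_{\cH(b)}^2=\sup_{h\in H^2}\bigl(\|f+bh\|_{H^2}^2-\|h\|_{H^2}^2\bigr)$: substituting $|b|^2=1-|a|^2$ and $\langle T_{\overline b}f,h\rangle=\langle f^+,ah\rangle$, and using density of $aH^2$ to replace the supremum over $\{ah:h\in H^2\}$ by one over all of $H^2$, one finds that it equals $\|f\|_{H^2}^2+\|f^+\|_{H^2}^2$ when $T_{\overline b}f\in T_{\overline a}(H^2)$ and $+\infty$ otherwise, the divergence in the remaining case coming from a Riesz-representation argument applied to the functional $ah\mapsto\langle h,T_{\overline b}f\rangle$ on $aH^2$.
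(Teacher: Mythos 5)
Your proof is correct, and it is worth noting that the paper itself offers no argument for this theorem: it is quoted directly from Sarason \cite[\S IV-1]{Sa94}, so you have supplied a genuine proof where the authors only cite one. Your route also differs from Sarason's. His argument runs through the complementary-space formalism and the Douglas range-inclusion criterion applied to the identity $I-T_bT_{\overline b}=T_{\overline a}T_a+ (T_{\overline b}T_b - T_bT_{\overline b})$, essentially comparing ranges of $(I-T_bT_{\overline b})^{1/2}$ and $T_{\overline a}$; your version instead realizes $\cH(b)$ concretely as the graph space $\{(-f^+,f)\}=\ker\bigl(T_{\overline a}\ \ T_{\overline b}\bigr)\subset H^2\oplus H^2$, using that the column operator $Ch=(ah,bh)$ is an isometry, and then identifies the transported space with $\cH(b)$ by computing that its reproducing kernel is $k_w^b$. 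All the steps check out: $T_{\overline a}T_a+T_{\overline b}T_b=I$ is immediate from $|a|^2+|b|^2=1$; injectivity of $T_{\overline a}$ does follow from outerness of $a$ via density of $aH^2$ (this is what makes $f^+$ unique and the second-coordinate projection on $\mathcal K$ injective); completeness of $\mathcal M$ and boundedness of evaluations are as you say; and the computation $P_{\mathcal K}(0,k_w)=\bigl(-\overline{b(w)}\,ak_w,\ (1-\overline{b(w)}b)k_w\bigr)$ is right, using $T_{\overline b}k_w=\overline{b(w)}k_w$ (which the paper proves in \S\ref{S:proof}). The one point you correctly flag is the appeal to the kernel description of $\cH(b)$ rather than the partial-isometry definition; the paper asserts this equivalence in \S\ref{S:background}, so invoking it is legitimate here, and your alternative kernel-free sketch via $\|f\|_{\cH(b)}^2=\sup_{h}(\|f+bh\|_{H^2}^2-\|h\|_{H^2}^2)$ is also sound and closer in spirit to the original de Branges--Rovnyak definition. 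What your approach buys is a self-contained, computational proof that simultaneously yields the membership criterion, the uniqueness of $f^+$, and the norm formula \eqref{E:f+}, at the modest cost of having to justify the passage between the two equivalent definitions of $\cH(b)$.
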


\section{Proof of Theorem~\ref{T:main}}\label{S:proof}

We shall prove Theorem~\ref{T:main} by establishing  the following slightly stronger result.

\begin{theorem}\label{T:stronger}
There exist an outer function $b$ non-extreme in the unit ball of $H^\infty$
and a function $f\in\cH(b)$
such that 
\begin{equation}\label{E:stronger}
\lim_{r\to 1^-}|(f_r)^+(0)|=\infty
\end{equation}
\end{theorem}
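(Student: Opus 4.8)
We want an outer $b$ (non-extreme) so that for some $f \in \mathcal H(b)$, the quantity $|(f_r)^+(0)|$ blows up as $r \to 1^-$. The governing identity is $T_{\bar b} f = T_{\bar a} f^+$, so evaluating at $0$ and using that for $g \in H^2$ one has $(T_{\bar a} g)(0) = \langle T_{\bar a} g, 1\rangle = \langle g, a\rangle$... more usefully, since $a$ is outer we can invert: $f^+ = T_{\bar a}^{-1} T_{\bar b} f$ on the appropriate range, and $f^+(0)$ can be extracted as an inner product against the reproducing-type element. Concretely, I would aim to compute $(f_r)^+$ explicitly in terms of $f$, $a$, $b$ and $r$. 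A natural first move is to note that $(f_r)^+$ relates to $(f^+)_r$ only up to a correction governed by how $T_{\bar a}$ interacts with dilation, and to pick $f$ so that $f^+$ itself is simple — ideally $f$ chosen so that $T_{\bar b} f = T_{\bar a}(\text{something nice})$ with the something nice being, say, a constant or a single monomial. The cleanest scenario: choose $f$ with $f^+ \equiv c$ a constant; then I must understand $(f_r)^+$.

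**Key steps.**

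First, construct the outer function. This is the "auxiliary result of independent interest" the authors advertise: build an outer $b$ in the ball of $H^\infty$, with $\log(1-|b|^2) \in L^1(\mathbb T)$ (so $b$ is non-extreme, hence polynomials are dense and Theorem~\ref{T:f+} applies), and with very precise control of $b$ (equivalently of the companion $a$, $|a|^2 = 1-|b|^2$) along the radius $(0,1)$ — presumably $a(r) \to 0$ at a prescribed rate while $a(0) > 0$. The device should be to prescribe $|a|$ on $\mathbb T$ as a function concentrating near the point $1$, so that the outer function $a$ with those boundary moduli decays along $(0,1)$ at a controlled rate; $b$ is then the outer function with $|b|^2 = 1-|a|^2$. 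Second, choose $f \in \mathcal H(b)$ explicitly — I expect $f = b$ itself, or $f = $ a reproducing kernel $k_0^b$, or a fixed polynomial, something for which $f^+$ is computable in closed form (for $f=b$, $T_{\bar b}b = T_{\bar a}(\overline{a(0)}\,\text{-ish})$... indeed $|b|^2 = 1-|a|^2$ gives $T_{\bar b}b = 1 - T_{\bar a}\bar a \cdot(\dots)$; more precisely $b^+ = -a(0)a$ up to constants, a standard computation). Third, and this is the crux, compute $(f_r)^+(0)$. Since $f_r \in \mathcal H(b)$ automatically once $f$ is (dilation is bounded into $\mathcal H(b)$ when, say, $f$ is a polynomial, or one argues directly), we need to solve $T_{\bar b} f_r = T_{\bar a} (f_r)^+$ and read off the value at $0$. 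Using $(T_{\bar b} f_r)(0) = \langle f_r, b\rangle_{H^2}$ and the analogous expression, one gets $(f_r)^+(0)$ as a ratio whose denominator involves $a$ evaluated in a way that feeds in the radial decay of $a$ built in step one — forcing the blow-up. Finally, deduce Theorem~\ref{T:main}: since $\|f_r\|_{\mathcal H(b)}^2 = \|f_r\|_{H^2}^2 + \|(f_r)^+\|_{H^2}^2 \ge |(f_r)^+(0)|^2 \to \infty$, \eqref{E:main} follows, and polynomials are dense because $b$ is non-extreme.

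**The main obstacle.**

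The technical heart is getting a workable closed form — or at least a sharp lower bound — for $(f_r)^+(0)$ and then marrying it to the construction of $a$. The subtlety is that $(f_r)^+$ is \emph{not} simply $(f^+)_r$: the Toeplitz operators $T_{\bar a}, T_{\bar b}$ do not commute with dilation, and the correction term is exactly what carries the divergence. I expect the argument to hinge on an exact formula of the shape $(f_r)^+ = (\text{dilate of } f^+) + (\text{explicit term involving } a, a_r, r)$, derived by manipulating $T_{\bar b}f_r - T_{\bar a}(f^+)_r = T_{\bar a}\bigl((f_r)^+ - (f^+)_r\bigr)$ and noting the left side is computable since $T_{\bar b} f = T_{\bar a} f^+$ holds before dilation. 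Then one must choose the boundary modulus $|a|$ on $\mathbb T$ — a careful peaking function near $z=1$, normalized so $\int \log|a| > -\infty$ and $a(0)>0$ — so that this correction term, evaluated at $0$, tends to infinity along $r \to 1^-$. Balancing "$b$ outer and in the ball, $a$ outer with $a(0)>0$ fixed" against "$(f_r)^+(0) \to \infty$" is the delicate quantitative step; everything else (density of polynomials, the passage from \eqref{E:stronger} to \eqref{E:main}) is immediate from Theorems~\ref{T:nonextreme} and~\ref{T:f+}.
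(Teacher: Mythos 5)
Your high-level frame is right---take $b$ outer and non-extreme, exhibit $f\in\cH(b)$ with $(f_r)^+(0)\to\infty$, and locate the divergence in the discrepancy between $(f_r)^+$ and $(f^+)_r$---but there are two genuine gaps, one structural and one in the choice of ingredients. Structurally, your route to $(f_r)^+$ does not close: from $T_{\overline b}f_r-T_{\overline a}(f^+)_r=T_{\overline a}\bigl((f_r)^+-(f^+)_r\bigr)$ you would need to evaluate $P_+(\overline b f_r)-P_+(\overline a\,(f^+)_r)$, and knowing $P_+(\overline b f)=P_+(\overline a f^+)$ \emph{before} dilation gives no handle on this, because $P_+$ and co-analytic Toeplitz operators do not commute with dilation; the ``explicit correction term'' you posit is exactly the thing you have no formula for. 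The paper's resolution is to take $f=\sum_j c_j k_{w_j}$, a combination of Cauchy kernels at real points $w_j\to1$: these are simultaneously eigenvectors of every $T_{\overline h}$ (namely $T_{\overline h}k_w=\overline{h(w)}k_w$) \emph{and} are permuted by dilation ($(k_w)_r=k_{rw}$ for real $w,r$). Hence $f_r$ is again such a combination, and one reads off $(f_r)^+=\sum_j c_j\phi(rw_j)k_{rw_j}$ with $\phi=b/a$, so $(f_r)^+(0)=\sum_j c_j\phi(rw_j)$ in closed form. Your candidate functions cannot play this role: for a polynomial the dilates stay in the fixed finite-dimensional span of $1,z,\dots,z^N$, where all norms are equivalent, so $p_r\to p$ in $\cH(b)$; more generally, for $f$ holomorphic in a neighborhood of $\overline{\DD}$, Sarason's formula \eqref{E:hbnorm} applied to $f_r$ keeps $\|f_r\|_{\cH(b)}$ bounded. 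The pathology requires an $f$ that is genuinely rough at the boundary.

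The second gap concerns the outer function. What is needed is not an $a$ that merely \emph{decays} along $(0,1)$ at a prescribed rate. Writing $\phi=b/a$, membership $f\in\cH(b)$ forces the coefficients $c_j\phi(w_j)$ to be small (summable against $(1-w_j)^{-1/2}$), while the blow-up of $(f_r)^+(0)\ge c_n\phi(rw_n)$ for $r\in[w_n,w_{n+1}]$ requires the ratio $\phi(rw_n)/\phi(w_n)$ to be enormous even though $w_n^2\le rw_n\le w_n$. A monotone radial profile for $|a|$ (hence for $\phi$) makes this ratio at most comparable to $1$ and kills the argument. What the paper's key lemma actually delivers---by prescribing $\log|\Phi|$ as a sum of bumps $(\epsilon_k/t_k)1_{[2t_k,3t_k]}$ on the real line and transporting to the disk by a conformal map---is an outer $\phi$ with $|\phi|\ge1$ that \emph{oscillates} violently along the radius, with $\log|\phi(rw_n)/\phi(w_n)|\ge\rho_n/(1-w_n)$ for all $r\in[w_n,w_{n+1}]$; one then sets $|a|^2=1/(1+|\phi|^2)$, $|b|^2=|\phi|^2/(1+|\phi|^2)$ so that $b/a=\phi$. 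Without this oscillation lemma and the Cauchy-kernel mechanism above, the proposal cannot be completed as written.
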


By Theorem~\ref{T:nonextreme}\,(iv), if $b$ is non-extreme, then polynomials are dense in $\cH(b)$.
Also,  if $b$ is non-extreme and $f\in\cH(b)$, then
by Theorem~\ref{T:nonextreme}\,(iii) we have $f_r\in\cH(b)$ for all $r\in(0,1)$, and
from Theorem~\ref{T:f+} we obtain
\[
\|f_r\|_{\cH(b)}\ge \|(f_r)^+\|_{H^2}\ge|(f_r)^+(0)|.
\]
Thus \eqref{E:stronger} implies \eqref{E:main},
and Theorem~\ref{T:main} is indeed a consequence of Theorem~\ref{T:stronger}.

In what follows, we shall write
$k_w(z):=1/(1-\overline{w}z)$, the Cauchy kernel.
It is the reproducing kernel for $H^2$ in the sense that
$f(w)=\langle f,k_w\rangle_{H^2}$ 
for all $f\in H^2$ and $w\in\DD$. 
In particular,
$\|k_w\|_{H^2}^2=\langle k_w,k_w\rangle_{H^2}=k_w(w)=1/(1-|w|^2)$.
We remark that $k_w$ has the useful property that
$T_{\overline{h}}(k_w)=\overline{h(w)}k_w$ for all $h\in H^\infty$. 
Indeed, given $g\in H^2$, we have
\[
\langle g, T_{\overline{h}}(k_w)\rangle_{H^2}
=\langle hg,k_w\rangle_{H^2}
=h(w)g(w)
=h(w)\langle g,k_w\rangle_{H^2}
=\langle g,\overline{h(w)}k_w\rangle_{H^2}.
\]

The proof of Theorem~\ref{T:stronger} depends on two lemmas.
The first lemma, a slight generalization of a result in \cite{EFKMR16}, 
provides a class of functions $f$ 
for which $f^+$ is readily identifiable.

\begin{lemma}\label{L:sumcjkwj}
Let $b$ be  non-extreme, 
let $(b,a)$ be a pair and let $\phi:=b/a$. Let
\[
f:=\sum_{j\ge1} c_j k_{w_j},
\]
where $(w_j)_{j\ge1}$ is a sequence in $\DD$, and  
$(c_j)_{j\ge1}$ are scalars satisfying
\begin{equation}\label{E:ccond} 
\sum_{j\ge1}|c_j|(1+|\phi(w_j)|)(1-|w_j|)^{-1/2}<\infty.
\end{equation}
Then $f\in\cH(b)$ and
\begin{equation}\label{E:sum}
f^+=\sum_{j\ge1}c_j\overline{\phi(w_j)}k_{w_j}.
\end{equation}
\end{lemma}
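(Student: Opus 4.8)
The plan is to exploit the two facts established just before the lemma: first, that $\cH(b)$ with $b$ non-extreme is characterized by Theorem~\ref{T:f+} via the condition $T_{\overline b}f\in T_{\overline a}(H^2)$; and second, the eigenvector-type identity $T_{\overline h}k_w=\overline{h(w)}k_w$ for $h\in H^\infty$. The obstacle is that $\phi=b/a$ is generally \emph{not} in $H^\infty$, so I cannot directly write $T_{\overline\phi}k_{w_j}=\overline{\phi(w_j)}k_{w_j}$; the summability hypothesis \eqref{E:ccond} is precisely what is needed to make sense of things regardless.

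First I would check convergence of the two series. Since $\|k_{w_j}\|_{H^2}=(1-|w_j|^2)^{-1/2}\le(1-|w_j|)^{-1/2}$, the hypothesis \eqref{E:ccond} gives $\sum_j\|c_jk_{w_j}\|_{H^2}<\infty$ and $\sum_j\|c_j\overline{\phi(w_j)}k_{w_j}\|_{H^2}<\infty$, so both $f:=\sum_j c_jk_{w_j}$ and $g:=\sum_j c_j\overline{\phi(w_j)}k_{w_j}$ converge absolutely in $H^2$. It then remains to verify the single identity $T_{\overline b}f=T_{\overline a}g$, for then Theorem~\ref{T:f+} yields $f\in\cH(b)$ and $f^+=g$, which is \eqref{E:sum}. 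By continuity of $T_{\overline b}$ and $T_{\overline a}$ on $H^2$, it suffices to prove $T_{\overline b}k_{w_j}=T_{\overline a}(\overline{\phi(w_j)}k_{w_j})$ for each fixed $j$, i.e.\ term by term.

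For the term-by-term identity, fix $w=w_j$. Applying the eigenvector property with $h=b\in H^\infty$ gives $T_{\overline b}k_w=\overline{b(w)}k_w$, and with $h=a\in H^\infty$ gives $T_{\overline a}k_w=\overline{a(w)}k_w$; hence $T_{\overline a}(\overline{\phi(w)}k_w)=\overline{\phi(w)}\,\overline{a(w)}k_w=\overline{\phi(w)a(w)}k_w=\overline{b(w)}k_w$, using $\phi a=b$ and that $a$ is outer so $a(w)\ne0$ and $\phi(w)$ is well defined. This matches $T_{\overline b}k_w$, completing the term-by-term check.

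Assembling the pieces: summing the term-by-term identity against the absolutely convergent coefficients and invoking boundedness of $T_{\overline a},T_{\overline b}$ gives $T_{\overline b}f=T_{\overline a}g$ with $g\in H^2$; Theorem~\ref{T:f+} then certifies $f\in\cH(b)$ with $f^+=g$, which is exactly \eqref{E:sum}. The only subtlety worth flagging is that the factor $(1+|\phi(w_j)|)$ in \eqref{E:ccond}, rather than just $|\phi(w_j)|$, is what ensures both $f$ \emph{and} $g$ lie in $H^2$ simultaneously (the $1$ controls $f$, the $|\phi(w_j)|$ controls $g$); everything else is a routine interchange of summation justified by absolute convergence in the Hilbert space $H^2$.
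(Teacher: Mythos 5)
Your proof is correct and follows essentially the same route as the paper: absolute convergence of both series in $H^2$ from \eqref{E:ccond}, the term-by-term identity $T_{\overline a}(\overline{\phi(w_j)}k_{w_j})=\overline{b(w_j)}k_{w_j}=T_{\overline b}k_{w_j}$ via the eigenvector property of Cauchy kernels, and then Theorem~\ref{T:f+} to conclude $f\in\cH(b)$ with $f^+=g$. Your closing remark about the role of the factor $(1+|\phi(w_j)|)$ is a nice observation, but the argument itself is the paper's.
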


\begin{proof}
Note first of all that
\[
\sum_{j\ge1}|c_j|\|k_{w_j}\|_{H^2}
=\sum_{j\ge1}|c_j|(1-|w_j|^2)^{-1/2}
<\infty,
\]
so the series defining $f$ converges absolutely in $H^2$. Further, defining
\[
g:=\sum_{j\ge1}c_j\overline{\phi(w_j)}k_{w_j},
\]
we likewise have $g\in H^2$ and
\begin{align*}
T_{\overline{a}}(g)
&=\sum_{j\ge1} c_j\overline{\phi(w_j)} T_{\overline{a}}(k_{w_j})
=\sum_{j\ge1} c_j\overline{\phi(w_j)}\,\overline{a(w_j)}k_{w_j}\\
&=\sum_{j\ge1} c_j\overline{b(w_j)}k_{w_j}
=\sum_{j\ge1} c_j T_{\overline{b}}(k_{w_j})
=T_{\overline{b}}(f).
\end{align*}
By Theorem~\ref{T:f+}, it follows that $f\in\cH(b)$, and $f^+=g$.
\end{proof}

The second lemma, inspired by an idea in \cite{CGR10},
yields an outer function with very precise control over
the growth along the radius $[0,1)$.

\begin{lemma}\label{L:outer}
Let $(w_n)$ be a strictly increasing sequence in $(0,1)$ such that $w_n\to1$,
and let $(\rho_n)$ be a positive sequence such that
\begin{equation}\label{E:rhohyp}
\sum_{k>n}\Bigl(\frac{1-w_{k}}{1-w_{k+1}}\Bigr)\rho_k=o(\rho_n)
\qquad(n\to\infty).
\end{equation}
Then there exists an outer function $\phi$ 
such that $|\phi|\ge1$ on $\DD$ and, for all $n$,
\begin{equation}\label{E:outer}
\inf_{r\in[w_n,w_{n+1}]}\log\Bigl|\frac{\phi(rw_n)}{\phi(w_n)}\Bigr|\ge \frac{\rho_n}{1-w_n}.
\end{equation}
\end{lemma}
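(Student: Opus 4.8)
The plan is to build $\phi$ as the exponent of a Cauchy--type integral, so that $\log|\phi|$ is the Poisson integral of a carefully chosen nonnegative density on $\TT$. Concretely, I would look for $\phi$ of the form
\[
\phi(z)=\exp\Bigl(\frac{1}{2\pi}\int_{\TT}\frac{e^{it}+z}{e^{it}-z}\,u(e^{it})\,dt\Bigr),
\]
where $u\ge0$ is integrable on $\TT$; this automatically makes $\phi$ outer with $\log|\phi(z)|$ equal to the Poisson extension $P[u](z)$, and in particular $|\phi|\ge1$ on $\DD$. The quantity we must control, $\log|\phi(rw_n)|-\log|\phi(w_n)|=P[u](rw_n)-P[u](w_n)$, should be made large and positive when $r$ ranges over $[w_n,w_{n+1}]$. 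The natural choice, following the idea in \cite{CGR10}, is to take $u$ to be (a multiple of) a sum of bumps, $u=\sum_k \lambda_k \mathbf{1}_{I_k}$, where $I_k$ is a short arc on $\TT$ centred at $1$ of length comparable to $1-w_{k+1}$, and $\lambda_k$ is a height to be determined from $\rho_k$.

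The key computation is an estimate for the Poisson integral of such a bump. For an arc $I$ of length $\ell$ centred at $1$, and a point $s\in(0,1)$, one has $P[\mathbf{1}_I](s)\approx \ell/(1-s)$ when $\ell \lesssim 1-s$, and $P[\mathbf{1}_I](s)\to 1$ as $s\to 1$ with $\ell$ fixed; more useful here is the \emph{difference} $P[\mathbf{1}_I](rw_n)-P[\mathbf{1}_I](w_n)$. For the arc $I_n$ of length $\asymp 1-w_{n+1}$: when $r\in[w_n,w_{n+1}]$ the point $rw_n$ is deeper inside the ``shadow'' of $I_n$ than $w_n$ is, so this difference is bounded below by a positive constant times $\ell/(1-w_n)$. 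I would choose $\lambda_n$ so that $\lambda_n \ell_n/(1-w_n)$ is exactly of order $\rho_n/(1-w_n)$, i.e. $\lambda_n\asymp \rho_n/\ell_n\asymp \rho_n/(1-w_{n+1})$; then the single bump $\lambda_n\mathbf{1}_{I_n}$ already delivers \eqref{E:outer}, since the other bumps contribute $P[\lambda_k\mathbf{1}_{I_k}]\ge0$ and so only help the lower bound. The only thing left is to check that $u\in L^1(\TT)$, i.e. $\sum_k \lambda_k \ell_k<\infty$: this is $\sum_k \rho_k<\infty$, which does follow from hypothesis \eqref{E:rhohyp} (taking $n$ fixed, the left side of \eqref{E:rhohyp} dominates a tail $\sum_{k>n}\rho_k$ up to the factor $(1-w_k)/(1-w_{k+1})\ge$ something controllable, or one argues directly that \eqref{E:rhohyp} forces $\rho_n\to0$ fast enough).

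The step I expect to be the main obstacle is the honest lower bound for the Poisson-integral difference over the \emph{whole} interval $r\in[w_n,w_{n+1}]$ simultaneously, with a constant independent of $n$: one must verify that $rw_n$ stays at distance $\asymp 1-w_n$ from the endpoints of the arc $I_n$ uniformly in $r$ and $n$, which is where the precise choice of the arc lengths $\ell_k$ — and their comparison with the gaps $1-w_k$ versus $1-w_{k+1}$ — really matters, and where hypothesis \eqref{E:rhohyp} with its ratio $(1-w_k)/(1-w_{k+1})$ presumably enters to keep the infinitely many overlapping bumps from spoiling either the $L^1$ bound or the localization. A secondary technical point is to confirm that the sum over bumps converges to an $L^1$ function (not merely pointwise), so that $\phi$ is genuinely a well-defined outer function in $H^\infty$ scaled appropriately — though since we only claim $|\phi|\ge1$ and control of radial growth, boundedness of $\phi$ is not needed, only that $\log|\phi|$ is a bona fide Poisson integral.
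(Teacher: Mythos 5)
Your overall architecture --- an outer function whose modulus is the exponential of a Poisson integral of a sum of bumps, a single-bump lower bound for the difference at the two evaluation points, plus control of the remaining bumps --- is the same as the paper's (which carries out the computation on the upper half-plane via the Cayley transform). However, three of your concrete claims are incorrect, and the first is fatal to the construction as you have set it up.

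First, the placement of the arcs. You centre $I_n$ at the point $1$ with length $\ell_n\asymp 1-w_{n+1}$. Both evaluation points $w_n$ and $rw_n$ lie at distance $\asymp 1-w_n\gg\ell_n$ from $1$, so for $e^{it}\in I_n$ the Poisson kernel of the point $s$ is $\approx 2/(1-s)$, whence
\[
P[\mathbf{1}_{I_n}](rw_n)-P[\mathbf{1}_{I_n}](w_n)\approx\frac{\ell_n}{\pi}\Bigl(\frac{1}{1-rw_n}-\frac{1}{1-w_n}\Bigr)\le0,
\]
because $rw_n<w_n$ lies \emph{farther} from the boundary than $w_n$ does; your phrase ``deeper inside the shadow'' has the geometry backwards. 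To make this difference positive one must place the $n$-th bump at angular distance comparable to, and slightly larger than, $1-w_n$ from the point $1$ (the paper uses the interval $[2t_n,3t_n]$ with $t_n\asymp1-w_n$ on the real line of the half-plane), so that both evaluation points sit \emph{below} the bump, in the regime where the Poisson kernel increases with the distance to the boundary. Second, even with the bump correctly placed, the gain is proportional to $u_n-v_n\asymp 1-r$, which over $r\in[w_n,w_{n+1}]$ is only bounded below by $1-w_{n+1}$; your normalization $\lambda_n\ell_n\asymp\rho_n$ therefore yields, at $r=w_{n+1}$, a gain of order $\rho_n(1-w_{n+1})/(1-w_n)^2$, which falls short of the target $\rho_n/(1-w_n)$ by the factor $(1-w_{n+1})/(1-w_n)$ (superexponentially small in the intended application). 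This is exactly why the paper gives the $n$-th bump total mass $\epsilon_n=\rho_n(1-w_n)/(1-w_{n+1})$, and why the integrability condition is $\sum_k\epsilon_k<\infty$ --- which is what \eqref{E:rhohyp} supplies --- rather than $\sum_k\rho_k<\infty$. Third, the assertion that the remaining bumps ``only help the lower bound'' because $P[\lambda_k\mathbf{1}_{I_k}]\ge0$ is a non sequitur: you are bounding a \emph{difference} of Poisson integrals at two points, and the bumps with $k>n$, which live at scales $\ll1-w_n$, contribute \emph{negatively} to that difference, each being bounded below only by $-C\epsilon_k/(1-w_n)$. Controlling this negative tail against the positive gain from the $n$-th bump is precisely where hypothesis \eqref{E:rhohyp} is used; it is not an optional refinement to be deferred.
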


\begin{proof}
Let $\Phi$ be the outer function on the upper half-plane such that
\[
\log|\Phi|=\sum_{k\ge1}(\epsilon_k/t_k)1_{[2t_k,3t_k]}
\quad\text{a.e. on~}\RR,
\]
where
\[
t_k:=\frac{1-w_k^2}{1+w_k^2}
\quad\text{and}\quad
\epsilon_k:=\Bigl(\frac{1-w_{k}}{1-w_{k+1}}\Bigr)\rho_k.
\]
Note that
\[
\frac{1}{\pi}\int_\RR\frac{\log|\Phi(x)|}{1+x^2}\,dx
\le \sum_{k\ge1}\epsilon_k=
\sum_{k\ge1}\Bigl(\frac{1-w_{k}}{1-w_{k+1}}\Bigr)\rho_k<\infty,
\]
so $\Phi$ is well-defined. Also, since $\log|\Phi|\ge0$ a.e.\ on $\RR$, it follows that $|\Phi|\ge1$ on the upper half-plane. 

Define $\phi$ on the unit disk by
\[
\phi(z):=\Phi\Bigl(i\frac{1-z}{1+z}\Bigr)\quad(|z|<1).
\]
Then $\phi$ is also  outer  and $|\phi|\ge1$.
Fix $n$ and let $r\in[w_n,w_{n+1}]$. Then, writing
\[
u_n:=\frac{1-rw_n}{1+rw_n}
\quad\text{and}\quad
v_n:=\frac{1-w_n}{1+w_n},
\]
we have
\begin{align*}
\log\Bigl|\frac{\phi(rw_n)}{\phi(w_n)}\Bigr|
&=\log|\Phi(iu_n)|-\log|\Phi(iv_n)|\\
&=\frac{1}{\pi}\int_\RR \Bigl(\frac{u_n}{u_n^2+x^2}-\frac{v_n}{v_n^2+x^2}\Bigr)\log|\Phi(x)|\,dx\\
&=\frac{1}{\pi}\sum_{k\ge1}\frac{\epsilon_k}{t_k}\int_{2t_k}^{3t_k}\Bigl(\frac{u_n}{u_n^2+x^2}-\frac{v_n}{v_n^2+x^2}\Bigr)\,dx.
\end{align*}
(The intervals $[2t_k,3t_k]$ are not necessarily disjoint, but this does not matter.)
Note that $w_n^2\le rw_n\le w_n$, so $v_n\le u_n\le t_n$.
Thus, if $x\ge t_n$, then
\[
\frac{u_n}{u_n^2+x^2}-\frac{v_n}{v_n^2+x^2}
=\frac{(u_n-v_n)(x^2-u_nv_n)}{(u_n^2+x^2)(v_n^2+x^2)}\ge0,
\]
whence, for all $k< n$, we have
\[
\frac{1}{t_k}\int_{2t_k}^{3t_k}\Bigl(\frac{u_n}{u_n^2+x^2}-\frac{v_n}{v_n^2+x^2}\Bigr)\,dx\ge0.
\]
The same is true if $k=n$, but in this case we have a better estimate. Indeed, if $x\in[2t_n,3t_n]$, then
\[
\frac{u_n}{u_n^2+x^2}-\frac{v_n}{v_n^2+x^2}
=\frac{(u_n-v_n)(x^2-u_nv_n)}{(u_n^2+x^2)(v_n^2+x^2)}
\ge\frac{(u_n-v_n)(3t_n^2)}{(10t_n^2)(10t_n^2)}=\frac{3(u_n-v_n)}{100t_n^2},
\]
so
\[
\frac{1}{t_n}\int_{2t_n}^{3t_n}\Bigl(\frac{u_n}{u_n^2+x^2}-\frac{v_n}{v_n^2+x^2}\Bigr)\,dx\ge\frac{3(u_n-v_n)}{100t_n^2}.
\]
Lastly, for all $k>n$, we clearly have
\[
\frac{1}{t_k}\int_{2t_k}^{3t_k}\Bigl(\frac{u_n}{u_n^2+x^2}-\frac{v_n}{v_n^2+x^2}\Bigr)\,dx\ge
-\frac{1}{t_k}\int_{2t_k}^{3t_k}\frac{v_n}{v_n^2+x^2}\,dx\ge
-\frac{1}{v_n}.
\]
Putting these estimates together, we arrive at the inequality
\[
\log\Bigl|\frac{\phi(rw_n)}{\phi(w_n)}\Bigr|
\ge \frac{1}{\pi}\Bigl(\epsilon_n\frac{3(u_n-v_n)}{100t_n^2}-\Bigl(\sum_{k>n}\epsilon_k\Bigr)\frac{1}{v_n}\Bigr).
\]
Now, a simple calculation gives
\begin{align*}
u_n-v_n
&=\frac{1-rw_n}{1+rw_n}-\frac{1-w_n}{1+w_n}
=\frac{2(1-r)w_n}{(1+rw_n)(1+w_n)}\\
&\ge \frac{2(1-r)w_1}{4}\ge \frac{w_1}{2}(1-w_{n+1}).
\end{align*}
Also, we have
\[
t_n
=\frac{1-w_n^2}{1+w_n^2}
\le 2(1-w_n)
\quad\text{and}\quad
v_n=\frac{1-w_n}{1+w_n}\ge(1-w_n)/2.
\]
It follows that there are constants $C_1,C_2>0$ such that
\begin{align*}
\log\Bigl|\frac{\phi(rw_n)}{\phi(w_n)}\Bigr|
&\ge C_1\epsilon_n\frac{(1-w_{n+1})}{(1-w_n)^2}-C_2\Bigl(\sum_{k>n}\epsilon_k\Bigr)\frac{1}{(1-w_n)}\\
&=\frac{1}{1-w_n}\Bigl(C_1\rho_n-C_2\sum_{k>n}\frac{1-w_{k}}{1-w_{k+1}}\rho_k\Bigr).
\end{align*}
The hypothesis \eqref{E:rhohyp} on the sequence $(\rho_n)$ implies that the term in parentheses is at least $C_1\rho_n/2$ for all large enough $n$. Thus
\[
\log\Bigl|\frac{\phi(rw_n)}{\phi(w_n)}\Bigr|\ge (C_1/2)\frac{\rho_n}{1-w_n}
\]
for all $r\in[w_n,w_{n+1}]$ and all sufficiently large $n$.
Replacing $\phi$ by a large enough power of itself, we may ensure that this inequality holds for all $n$
and with a constant $C_1/2=1$. Thus \eqref{E:outer} holds.
\end{proof}

\begin{proof}[Proof of Theorem~\ref{T:stronger}]
Fix $\alpha,\beta$ with $1<\alpha<\beta<\alpha+1$, and define
\[
w_n:=1-e^{-n^\beta}
\quad\text{and}\quad
\rho_n:=e^{-n^\alpha}.
\]
Then
\[
\sum_{k>n}\Bigl(\frac{1-w_{k}}{1-w_{k+1}}\Bigr)\rho_k
=\sum_{k>n}e^{(-k^\beta+(k+1)^\beta-k^\alpha)}
=\sum_{k>n}e^{-k^\alpha(1+o(1))}=o(e^{-n^\alpha}),
\]
so condition \eqref{E:rhohyp} is satisfied.
By Lemma~\ref{L:outer}, there is an outer function $\phi$ such that $|\phi|\ge1$ on $\DD$ and
\begin{equation}\label{E:lowerbd}
\log\Bigl|\frac{\phi(rw_n)}{\phi(w_n)}\Bigr|\ge e^{n^\beta-n^\alpha}
\quad(r\in[w_n,w_{n+1}],~n\ge1).
\end{equation}
Replacing $\phi(z)$ by $\phi(z)\overline{\phi(\overline{z})}$,
we may further assume that $\phi$ takes positive real values on $(-1,1)$.

Let $a,b$ be the outer functions (normalized to be positive at $0$) satisfying 
\[
|a|^2=\frac{1}{1+|\phi|^2}
\quad\text{and}\quad
|b|^2=\frac{|\phi|^2}{1+|\phi|^2}
\quad\text{a.e.\ on~}\TT.
\]
Then $(b,a)$ is a pair and $b/a=\phi$. In particular, as $\log(1-|b|^2)=2\log|a|\in L^1(\TT)$,
the function $b$ is a non-extreme point of the unit ball of $H^\infty$, by Theorem~\ref{T:nonextreme}.

Define
\[
f:=\sum_{j\ge1}c_jk_{w_j},
\]
where $c_j:=(1-w_j)^{1/2}/(j^2\phi(w_j))$. 
Clearly \eqref{E:ccond} is satisfied, so, by Lemma~\ref{L:sumcjkwj}, we have $f\in\cH(b)$.
Also, for each $r\in(0,1)$, we have
$f_r=\sum_{j\ge1}c_jk_{rw_j}$,
and a second application of Lemma~\ref{L:sumcjkwj} shows that $f_r\in\cH(b)$ with
$(f_r)^+=\sum_{j\ge1}c_j\phi(rw_j)k_{rw_j}$.
In particular,
\begin{equation}\label{E:fr+}
(f_r)^+(0)=\sum_{j\ge1}c_j\phi(rw_j) \quad(0<r<1).
\end{equation}
Now, given $r\in(w_1,1)$, choose $n$ so that $r\in[w_n,w_{n+1}]$. 
Then, since all the terms in the series \eqref{E:fr+} are positive, we have
\[
\sum_{j\ge1}c_j\phi(rw_j)\ge c_n\phi(rw_n)
=\frac{(1-w_n)^{1/2}\phi(rw_n)}{n^2\phi(w_n)}
\ge \frac{e^{-n^\beta/2}\exp(e^{n^\beta-n^\alpha})}{n^2},
\]
where the last inequality comes from \eqref{E:lowerbd}.
Clearly the right-hand side tends to infinity as $n\to\infty$. 
Hence $(f_r)^+(0)\to\infty$ as $r\to1^-$.
\end{proof}

\section{Concluding remarks}\label{S:remarks}

\subsection{Rate of growth}
 The construction above yields a function $f$ satisfying
\[
\log\|f_r\|_{\cH(b)}\ge \frac{c}{1-r}\exp\Bigl(-\Bigl(\log\frac{1}{1-r}\Bigr)^{\alpha/\beta}\Bigr)
\quad(0<r<1),
\]
where $c$ is a positive constant. Whilst this estimate could be slightly improved with a more careful choice of $(w_n)$ and $(\rho_n)$, it is not far from optimal. Indeed, if $b$ is non-extreme and $f\in\cH(b)$, then necessarily
 $\log^+\|f_r\|_{\cH(b)}=o(1/(1-r))$ as $r\to1^-$
(see \cite[Theorem~5.2]{CGR10}).

\subsection{Failure of Sarason's formula}
Let $b$ be non-extreme, 
let $(b,a)$ be a pair 
and let $\phi:=b/a$, 
say $\phi(z)=\sum_{j\ge0}\hat{\phi}(j)z^j$. 
It was shown by Sarason in \cite{Sa08} that,
if $f$ is holomorphic in a neighborhood of $\overline{\DD}$,
say $f(z)=\sum_{k\ge0}\hat{f}(k)z^k$, 
then the series 
$\sum_{j\ge0}\hat{f}(j+k)\overline{\hat{\phi}(j)}$ 
converges absolutely for each $k$, and 
\begin{equation}\label{E:hbnorm}
\|f\|_{\cH(b)}^2
=\sum_{k\ge0}|\hat{f}(k)|^2
+\sum_{k\ge0}\Bigl|\sum_{j\ge0}\hat{f}(j+k)
\overline{\hat{\phi}(j)}\Bigr|^2.
\end{equation}
Rather surprisingly, the formula \eqref{E:hbnorm} does \emph{not} extend to arbitrary $f\in\cH(b)$. 
It was shown in \cite{EFKMR16} that, for certain choices of $b,f$, it may happen that $\sum_{j\ge0}\hat{f}(j)\overline{\hat{\phi}(j)}$ diverges, and consequently that  \eqref{E:hbnorm} breaks down. Exactly the same argument, now used in conjunction with Theorem~\ref{T:stronger}, shows that this phenomenon may even occur when $b$ is outer.

\subsection{Summability methods}
It was shown in \cite{EFKMR16} that, 
if $\lim_{r\to1^-}\|f_r\|_{\cH(b)}=\infty$, then necessarily
the Taylor partial sums $s_n(f)$ of~$f$ 
and their Ces\`aro means $\sigma_n(f)$ satisfy
\begin{equation}\label{E:summability}
\limsup_{n\to\infty}\|s_n(f)\|_{\cH(b)}=\infty
\quad\text{and}\quad
\limsup_{n\to\infty}\|\sigma_n(f)\|_{\cH(b)}=\infty.
\end{equation}
Using Theorem~\ref{T:main}, we see that 
\eqref{E:summability} may occur even if $b$ is outer.
This raises the following question: given $b$ non-extreme in the unit ball of $H^\infty$, 
is there always a summability method $(S_n)$ (depending on $b$) 
such that $\lim_{n\to\infty}\|S_n(f)-f\|_{\cH(b)}=0$
for all $f\in\cH(b)$?

\bibliography{biblist}
\bibliographystyle{plain}

\end{document}